\newtheorem{theorem}{Theorem}
\newtheorem{corollary}{Corollary}
\newtheorem{remark}{Remark}
\newtheorem{example}{Example}
\bmdefine{\Bt}{t}
\bmdefine{\BX}{X}
\bmdefine{\BY}{Y}
\bmdefine{\BZ}{Z}
\bmdefine{\BB}{B}
\bmdefine{\BM}{M}
\bmdefine{\BD}{D}
\bmdefine{\Bi}{i}
\bmdefine{\Bj}{j}
\bmdefine{\Bk}{k}
\bmdefine{\Bx}{x}
\bmdefine{\By}{y}
\bmdefine{\Bz}{z}
\bmdefine{\Bv}{v}
\bmdefine{\Bw}{w}
\bmdefine{\Bn}{n}
\bmdefine{\Ba}{a}
\bmdefine{\Bb}{b}
\bmdefine{\Bc}{c}
\bmdefine{\Be}{e}
\bmdefine{\Bu}{u}
\bmdefine{\Bp}{p}
\bmdefine{\Bzero}{0}
\bmdefine{\Bone}{1}
\def\znl#1{{\cal C}_{#1}}
\def\2{{1 \over \,2\,}}
\def\hyperF#1#2{{}_{#1}\kern-.05emF_{#2}}
\def\pdop{\partial}
\def\psum#1{\kern-0.75ex\mathop{\ \hbox{$\displaystyle\sum_{#1}$}^*}}
\def\psumtxt{\mathop{\ \hbox{$\sum$}^*}}
\def\pd#1{\partial_{#1}}
\title{Distribution of ratio of two Wishart matrices and evaluation of cumulative probability by holonomic gradient method} 
\author{
Hiroki Hashiguchi\thanks{Department of Mathematical Information Science, Tokyo University of Science}, 
Nobuki Takayama\thanks{Department of Mathematics, Kobe University}  \ 
and Akimichi Takemura\thanks{Data Science Education and Research Center, Shiga University}
}
\date{October, 2016}
\begin{document}

\maketitle

\begin{abstract}
We study the distribution of the ratio of two central Wishart matrices with different covariance matrices.  We first derive the density function of a particular matrix form
of the ratio and show that its cumulative distribution function can be expressed
in terms of the hypergeometric function $\hyperF{2}{1}$ of a matrix argument.  
Then we apply the holonomic gradient method  for numerical evaluation of 
the hypergeometric function.
This approach enables  us to compute the power function of Roy's maximum root test
for testing the equality of two covariance matrices.
\end{abstract}

\noindent
{\it Keywords and phrases}: $D$-modules, equality of covariance matrices,
Gr\"obner basis, hypergeometric function of a matrix argument, Roy's maximum root test, zonal polynomial

\section{Introduction}

Let $W_1$ and $W_2$ be two independent  Wishart matrices having the distribution 
$W_m(n_1, \Sigma_1)$ and $W_m(n_2, \Sigma_2)$, respectively, where 
$W_m(n,\Sigma)$ denotes the $m\times m$ Wishart distribution with $n$ degrees of freedom and the
covariance matrix $\Sigma$.  
We assume $n_1, n_2 \ge m$ and $\Sigma_1, \Sigma_2$ are positive definite.
For testing the equality of covariance matrices
\begin{equation}
\label{eq:testing-problem}
H_0 : \Sigma_1 = \Sigma_2 
\end{equation}
we usually use test statistics based on the roots of $W_1 W_2^{-1}$.
$W_1 W_2^{-1}$ (or some symmetric variant of $W_1 W_2^{-1}$) is often called the
$F$ matrix. In this paper we are particularly interested in the largest root
$l_1(W_1 W_2^{-1})$  of $W_1 W_2^{-1}$, which is
Roy's maximum root statistic for testing $H_0$.
It is a natural test statistic for testing against the one-sided alternative hypothesis 
\[
H_1 : \Sigma_1 \ge \Sigma_2,
\] 
where the inequality is in the sense of  Loewner order.
\citet{kuriki-1993as} studied the likelihood ratio statistic against
this one-sided alternative.  

In order to compute  the power function of Roy's maximum root test, we need to
evaluate the probability $P(l_1(W_1 W_2^{-1})\le x)$ for the general case
$\Sigma_1 \neq \Sigma_2$.
The event $l_1(W_1 W_2^{-1})\le x$ can be written as
\begin{equation}
\label{eq:acceptance-region}
l_1(W_1 W_2^{-1}) \le x   \ \  \Leftrightarrow \ \ W_1 \le x W_2 \  \  \Leftrightarrow \ \
W_2^{-1/2} W_1 W_2^{-1/2} \le x I_m,
\end{equation}
where we specifically take $W_2^{1/2}$ to be the unique {\em positive definite square root} of $W_2$.
Note that in considering the distribution of the roots of $W_1 W_2^{-1}$, we can assume
$\Sigma_1 = I_m$ without loss of generality, because the distribution of the roots of $W_1 W_2^{-1}$
depends only on the roots of $\Sigma_1 \Sigma_2^{-1}$.
Under this additional assumption $\Sigma_1=I_m$, we derive the density function of the matrix
\begin{equation}
\label{eq:U}
U=W_2^{-1/2} W_1 W_2^{-1/2}
\end{equation}
and its cumulative distribution function $P(U\le \Omega)$, which involves
the hypergeometric function $\hyperF{2}{1}$ of a matrix argument.  
Then by specifying $\Omega = xI_m$ we obtain $P(l_1(W_1 W_2^{-1}) \le x)$.

By expressing the  cumulative distribution function of the maximum root in terms of $\hyperF{2}{1}$,  
we can apply the
holonomic gradient method (HGM, see e.g.\ \citet{hgd}) to numerically evaluate $\hyperF{2}{1}$.
In \citet{hashiguchi-etal-1f1} we have already shown that HGM works very well for $\hyperF{1}{1}$, 
which appears in Roy's maximum root test for the one-sample problem $H_0: \Sigma=\Sigma_0$.
Hence this paper is continuation of \citet{hashiguchi-etal-1f1} and demonstrates that HGM works well also
for $\hyperF{2}{1}$ unless the parameter values are extreme.

The organization of this paper is as follows.
In Section \ref{sec:distribution} we derive the density function and the cumulative distribution function of
$W_2^{-1/2} W_1 W_2^{-1/2}$. 
In Section \ref{sec:relations-to-known-results} we discuss relations of our results to earlier results on the $F$ matrix.
In Section \ref{sec:hgm} we study HGM for $\hyperF{2}{1}$, based on the partial differential equation
of Muirhead (\citet{muirhead-1970}, \citet{muirhead-book}).
In Section \ref{sec:numerical} we present results on numerical experiments of HGM.
We end the paper with some discussion of open problems in Section \ref{sec:discussion}.


\section{Distribution of ratio of two Wishart matrices}
\label{sec:distribution}
In this section we derive results on the density and the cumulative distribution function of
$U$ in \eqref{eq:U}.

First we present the following theorem concerning the density of $U$.

\begin{theorem}
\label{thm:density}
Under the assumption of $\Sigma_1=I_m$, the density function of $U= W_2^{-1/2} W_1 W_2^{-1/2} \; \ge 0$ is given by 
\begin{equation}
\label{eq:U-density}
f(U)
=\frac{\Gamma_m(\frac{n_1 + n_2}{2})|\Sigma_2|^{n_1/2} }
{\Gamma_m(\frac{n_1}{2})  \Gamma_m(\frac{n_2}{2}) }
 |I+\Sigma_2 U|^{-(n_1 + n_2)/2}
|U|^{(n_1 - m-1)/2}.
\end{equation}
\end{theorem}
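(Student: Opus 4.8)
The plan is to obtain the joint density of $(W_1, W_2)$ from independence, change variables from $W_1$ to $U = W_2^{-1/2} W_1 W_2^{-1/2}$ with $W_2$ held fixed, and then integrate out $W_2$. Since $W_1 \sim W_m(n_1, I_m)$ (using $\Sigma_1 = I_m$) and $W_2 \sim W_m(n_2, \Sigma_2)$ are independent, their joint density on the cone of pairs of positive definite matrices is the product
\[
\frac{|W_1|^{(n_1-m-1)/2} \etr(-\tfrac12 W_1)}{2^{mn_1/2}\Gamma_m(n_1/2)}
\cdot
\frac{|\Sigma_2|^{-n_2/2}|W_2|^{(n_2-m-1)/2} \etr(-\tfrac12 \Sigma_2^{-1} W_2)}{2^{mn_2/2}\Gamma_m(n_2/2)}.
\]
First I would fix $W_2$ and substitute $W_1 = W_2^{1/2} U W_2^{1/2}$. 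The Jacobian of this linear map on the space of symmetric matrices is $|W_2|^{(m+1)/2}$ (this is the standard Jacobian for the congruence transformation $X \mapsto A X A^\top$ with $A = W_2^{1/2}$, giving $|A|^{m+1} = |W_2|^{(m+1)/2}$), and $|W_1| = |W_2|\,|U|$, while $\tr(W_1) = \tr(W_2 U)$. This yields the joint density of $(U, W_2)$.

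Next I would integrate over $W_2 > 0$. Collecting the powers of $|W_2|$, the exponent is $(n_2 - m - 1)/2 + (m+1)/2 + (n_1 - m - 1)/2 = (n_1 + n_2 - m - 1)/2$, and the exponential factor is $\etr\!\big(-\tfrac12 W_2(\Sigma_2^{-1} + U)\big)$. Hence the $W_2$-integral is
\[
\int_{W_2 > 0} |W_2|^{(n_1+n_2-m-1)/2}\,\etr\!\Big(-\tfrac12 W_2(\Sigma_2^{-1}+U)\Big)\,dW_2
= 2^{m(n_1+n_2)/2}\,\Gamma_m\!\Big(\tfrac{n_1+n_2}{2}\Big)\,\big|\Sigma_2^{-1}+U\big|^{-(n_1+n_2)/2},
\]
by the matrix gamma integral (Muirhead's standard formula $\int_{S>0}|S|^{a-(m+1)/2}\etr(-S Z)\,dS = \Gamma_m(a)|Z|^{-a}$, applied with $a = (n_1+n_2)/2$ and $Z = \tfrac12(\Sigma_2^{-1}+U)$). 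Multiplying by the remaining factors $|U|^{(n_1-m-1)/2}$ and the normalizing constants, the powers of $2$ cancel, and writing $|\Sigma_2^{-1}+U|^{-(n_1+n_2)/2} = |\Sigma_2|^{(n_1+n_2)/2}|I + \Sigma_2 U|^{-(n_1+n_2)/2}$ and combining with $|\Sigma_2|^{-n_2/2}$ leaves the factor $|\Sigma_2|^{n_1/2}$, which produces exactly \eqref{eq:U-density}.

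The main technical point to get right is the Jacobian of the transformation $W_1 \mapsto U$ for fixed $W_2$: one must be careful that $W_2^{1/2}$ is the symmetric positive definite square root so that the map is the symmetric congruence $U \mapsto W_2^{1/2} U W_2^{1/2}$, whose Jacobian on the $\binom{m+1}{2}$-dimensional space of symmetric matrices is $|W_2|^{(m+1)/2}$; a sign or exponent slip here propagates into the final power of $|U|$. A secondary bookkeeping obstacle is tracking all powers of $2$ and the $|\Sigma_2|$ factors through the cancellation, but these are routine once the Jacobian is fixed. Everything else is a direct application of the matrix gamma integral and the determinant identity $|\Sigma_2^{-1} + U| = |\Sigma_2|^{-1}|I + \Sigma_2 U|$.
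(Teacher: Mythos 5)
Your proposal is correct and follows essentially the same route as the paper: the change of variables $(W_1,W_2)\to(U,W_2)$ with Jacobian $|W_2|^{(m+1)/2}$, followed by integrating out $W_2$ via the matrix gamma integral. Your bookkeeping of the powers of $|W_2|$, the factor $|\Sigma_2|^{n_1/2}$, and the cancellation of the powers of $2$ all check out (and you are in fact more explicit than the paper about the final integration step).
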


\begin{proof}
Consider the transformation
$
(W_1, W_2)  \rightarrow (U, W_2)
$
with the Jacobian
\[
dW_1  dW_2 = |W_2|^{(m+1)/2} d U d W_2 .
\]
The joint density of $(W_1, W_2)$ is given as
\[
\frac{1}{2^{mn_1} \Gamma_m(\frac{n_1}{2})}  |W_1|^{(n_1-m-1)/2} 
\exp\left(-\frac{1}{2} {\rm tr}  W_1\right)
\frac{1}{2^{mn_2} \Gamma_m(\frac{n_2}{2}) |\Sigma_2|^{n_2/2}} |W_2|^{(n_2-m-1)/2} 
\exp\left(-\frac{1}{2} {\rm tr} \Sigma_2^{-1} W_2\right) .
\]
Therefore, letting
\[
 C_1 = \frac{1}{2^{mn_1} \Gamma_m(\frac{n_1}{2})2^{mn_2} \Gamma_m(\frac{n_2}{2}) |\Sigma_2|^{n_2/2}},
\] 
the joint density of $(U, W_2)$ is 
\begin{align}
f(U,W_2)&=C_1 |W_2|^{(m+1)/2} \times |W_2^{1/2}U W_2^{1/2}|^{(n_1-m-1)/2}
\exp\left( - \frac{1}{2} {\rm tr} W_2^{1/2} U W_2^{1/2}\right) \nonumber \\ 
& \qquad \times  |W_2|^{(n_2-m-1)/2} 
\exp\left(-\frac{1}{2} {\rm tr} \Sigma_2^{-1} W_2\right)  \nonumber \\
&= C_1 |W_2|^{(n_1 + n_2-m-1)/2} |U|^{(n_1 - m-1)/2} 
\exp\left(-\frac{1}{2} {\rm tr} \left( W_2^{1/2} U W_2^{1/2} + \Sigma_2^{-1} W_2\right) \right) \nonumber \\
&= C_1 |W_2|^{(n_1 + n_2-m-1)/2} |U|^{(n_1 - m-1)/2} 
\exp\left(-\frac{1}{2} {\rm tr}  (U + \Sigma_2^{-1}) W_2) \right) .
\label{eq:u-w2-joint-density}
\end{align}
Integrating this with respect to $W_2$ gives
\eqref{eq:U-density}.
\end{proof}

\begin{remark}
\label{rem:symmetric-square-root}
In Theorem \ref{thm:density} the positive definite square root $W_2^{-1/2}$ is essential.   
Other square roots do not work because of non-commutativity.
\citet{herz-1955} uses the positive definite square root for deriving the
integral expression of $\hyperF{2}{1}$ from that of $\hyperF{1}{1}$ and the argument of 
Theorem \ref{thm:density} follows Herz's derivation.
\end{remark}

From Theorem \ref{thm:density} we have the following expression
for the cumulative distribution function of $U$, which involves
the hypergeometric function $\hyperF{2}{1}$ of a matrix argument.  

\begin{theorem}
\label{thm:cdf}
Under the same assumption above, the cumulative probability  $P(U\le \Omega)$ is given by
\begin{equation}
\label{eq:cdf}
P(U\le \Omega)=\frac{\Gamma_m(\frac{m+1}{2})\Gamma_m(\frac{n_1+n_2}{2})}
{\Gamma_m(\frac{n_1 + m+1}{2})\Gamma_m(\frac{n_2}{2})} |\Sigma_2 \Omega|^{n_1/2}
\hyperF{2}{1}\left( \frac{n_1}{2},\frac{n_1 + n_2}{2}; \frac{n_1 + m+1}{2};-\Sigma_2 \Omega\right) .
\end{equation}
\end{theorem}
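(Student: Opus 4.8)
The plan is to obtain $P(U \le \Omega)$ by integrating the density of Theorem~\ref{thm:density} over the Loewner interval $\{U : 0 < U,\ \Omega - U > 0\}$ and then recognizing the resulting matrix integral as an Euler‑type integral representation of $\hyperF{2}{1}$. So the first step is simply
\[
P(U \le \Omega) = \frac{\Gamma_m(\frac{n_1+n_2}{2})\,|\Sigma_2|^{n_1/2}}{\Gamma_m(\frac{n_1}{2})\,\Gamma_m(\frac{n_2}{2})}\int_{0 < U < \Omega} |I + \Sigma_2 U|^{-(n_1+n_2)/2}\,|U|^{(n_1-m-1)/2}\, dU ,
\]
using \eqref{eq:U-density}.

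Next I would normalize the region of integration by the change of variables $U = \Omega^{1/2} V \Omega^{1/2}$, with $\Omega^{1/2}$ the positive definite square root, so that $\{0 < U < \Omega\}$ maps bijectively onto $\{0 < V < I\}$ with $dU = |\Omega|^{(m+1)/2}\, dV$ and $|U|^{(n_1-m-1)/2} = |\Omega|^{(n_1-m-1)/2}\,|V|^{(n_1-m-1)/2}$. Using $\det(I + AB) = \det(I + BA)$ one rewrites $|I + \Sigma_2 \Omega^{1/2} V \Omega^{1/2}| = |I + \Omega^{1/2}\Sigma_2\Omega^{1/2} V|$. Collecting the powers of $|\Omega|$, which total $(n_1-m-1)/2 + (m+1)/2 = n_1/2$ and combine with $|\Sigma_2|^{n_1/2}$ into $|\Sigma_2\Omega|^{n_1/2}$, gives
\[
P(U \le \Omega) = \frac{\Gamma_m(\frac{n_1+n_2}{2})\,|\Sigma_2\Omega|^{n_1/2}}{\Gamma_m(\frac{n_1}{2})\,\Gamma_m(\frac{n_2}{2})}\int_{0 < V < I} |I + \Omega^{1/2}\Sigma_2\Omega^{1/2} V|^{-(n_1+n_2)/2}\,|V|^{(n_1-m-1)/2}\, dV .
\]

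Then I would invoke the integral representation of the Gauss hypergeometric function of matrix argument (\citet{muirhead-book}; see also Herz's derivation mentioned in Remark~\ref{rem:symmetric-square-root}),
\[
\hyperF{2}{1}(a,b;c;Z) = \frac{\Gamma_m(c)}{\Gamma_m(a)\,\Gamma_m(c-a)}\int_{0<V<I}|V|^{a-(m+1)/2}\,|I-V|^{c-a-(m+1)/2}\,|I-ZV|^{-b}\, dV ,
\]
valid for $\Re(a) > (m-1)/2$ and $\Re(c-a) > (m-1)/2$, and match parameters so that the integrand coincides with ours: take $a = n_1/2$ (so $a - (m+1)/2 = (n_1-m-1)/2$), then $c = (n_1+m+1)/2$ so that the exponent $c - a - (m+1)/2$ of $|I-V|$ vanishes and that factor drops out, $b = (n_1+n_2)/2$, and $Z = -\Omega^{1/2}\Sigma_2\Omega^{1/2}$. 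The hypotheses hold since $n_1 \ge m$ gives $\Re(a) > (m-1)/2$ and $\Re(c-a) = (m+1)/2 > (m-1)/2$ always. Since $\hyperF{2}{1}(a,b;c;\cdot)$ depends only on the eigenvalues of its argument and $\Omega^{1/2}\Sigma_2\Omega^{1/2}$ is similar to $\Sigma_2\Omega$, I may replace $Z$ by $-\Sigma_2\Omega$. Substituting this back and using $\Gamma_m(c-a) = \Gamma_m(\tfrac{m+1}{2})$, the factors $\Gamma_m(\tfrac{n_1}{2})$ cancel and \eqref{eq:cdf} follows.

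The calculation is mostly bookkeeping once the integral representation is available; the two points that actually require care are (i) the parameter matching that kills the $|I-V|$ factor — this is precisely what forces the third parameter to be $(n_1+m+1)/2$, explaining its form in \eqref{eq:cdf} — and (ii) the legitimacy of applying the integral representation at the negative definite argument $-\Sigma_2\Omega$, where the defining zonal‑polynomial series for $\hyperF{2}{1}$ need not converge; here one relies on the integral itself (equivalently, analytic continuation in the eigenvalues from $\|Z\|<1$) as the definition of $\hyperF{2}{1}$, the integral being convergent because $I - ZV$ remains positive definite on $0 < V < I$.
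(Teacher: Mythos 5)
Your proposal is correct and follows essentially the same route as the paper: integrate the density over $\{0<U<\Omega\}$, normalize by $U=\Omega^{1/2}V\Omega^{1/2}$, commute the determinant factors, and identify the resulting integral with the Euler integral representation of $\hyperF{2}{1}$, finally replacing $-\Omega^{1/2}\Sigma_2\Omega^{1/2}$ by $-\Sigma_2\Omega$ via the eigenvalue argument. Your added remarks on the parameter matching and on interpreting $\hyperF{2}{1}$ at a negative definite argument via the (convergent) integral are points the paper leaves implicit, but the argument is the same.
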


\begin{proof}
Let $C_2 = \Gamma_m(\frac{n_1 + n_2}{2})|\Sigma_2|^{n_1/2}/(\Gamma_m(\frac{n_1}{2})  \Gamma_m(\frac{n_2}{2}))$. Then
\[
P(U\le \Omega)= C_2 \int_{0\le U\le \Omega}   |I_m +\Sigma_2 U|^{-(n_1 + n_2)/2}
|U|^{(n_1 - m-1)/2} dU.
\]
Let $\tilde U = \Omega^{-1/2} U \Omega^{-1/2}$.
Then $P(U\le \Omega) = P(\tilde U\le I_m)$.
The Jacobian of the transformation is  $dU = |\Omega|^{(m+1)/2} d \tilde U$.
Hence
\begin{align*}
P(U\le \Omega) &= C_2  |\Omega|^{n_1/2}  \int_{0\le U\le I_m} 
  |I_m+\Sigma_2 \Omega^{1/2}\tilde U \Omega^{1/2}|^{-(n_1 + n_2)/2}
|\tilde U|^{(n_1 - m-1)/2} d\tilde U \\
&= C_2  |\Omega|^{n_1/2}  \int_{0\le U\le I_m} 
  |I_m+\Omega^{1/2} \Sigma_2 \Omega^{1/2}\tilde U |^{-(n_1 + n_2)/2}
|\tilde U|^{(n_1 - m-1)/2} d\tilde U \\
&= C_2 |\Omega|^{n_1/2} 
\frac{\Gamma_m(\frac{n_1}{2}) \Gamma_m(\frac{m+1}{2})}{\Gamma_m(\frac{n_1+m+1}{2})}
\hyperF{2}{1}\left(  \frac{n_1}{2},\frac{n_1 + n_2}{2}; \frac{n_1 + m+1}{2};-\Omega^{1/2} \Sigma_2 \Omega^{1/2}\right) \\
&=C_2|\Omega|^{n_1/2} 
\frac{\Gamma_m(\frac{n_1}{2}) \Gamma_m(\frac{m+1}{2})}{\Gamma_m(\frac{n_1+m+1}{2})}
\hyperF{2}{1}\left( \frac{n_1}{2},\frac{n_1 + n_2}{2}; \frac{n_1 + m+1}{2};-\Sigma_2 \Omega\right) .
\end{align*}
The last equality holds from the fact that the roots of 
$\Omega^{1/2}\Sigma_2 \Omega^{1/2}$ is the same as the roots of
$\Sigma_2 \Omega^{1/2}\Omega^{1/2}=\Sigma_2 \Omega$, including multiplicities,
and the hypergeometric function only depends on the roots of the matrix argument.
Rewriting the constants yields \eqref{eq:cdf}.
\end{proof}

\begin{remark}
Unlike Theorem \ref{thm:density}, in the above proof, 
$\Omega^{1/2}$ can be any square root of $\Omega$, i.e., it
does not have to be the positive definite square root of $\Omega$.
\end{remark}

By setting $\Omega=x I_m$ we have the following corollary.  We state this 
corollary without assuming $\Sigma_1 = I_m$ for the purpose of easier reference.
\begin{corollary}
\label{cor:max-root-result}
Let $W_1$ and $W_2$ be two independent  Wishart matrices having the distribution 
$W_m(n_1, \Sigma_1)$ and $W_m(n_2, \Sigma_2)$, respectively.
The the probability $P(l_1(W_1 W_2^{-1})\le x)$ is expressed as
\begin{align}
P(l_1(W_1 W_2^{-1})\le x) &= 
\frac{\Gamma_m(\frac{m+1}{2})\Gamma_m(\frac{n_1+n_2}{2})}
{\Gamma_m(\frac{n_1 + m+1}{2})\Gamma_m(\frac{n_2}{2})} x^{mn_1/2}|\Sigma_2 \Sigma_1^{-1}|^{n_1/2}\nonumber \\
& \qquad
\cdot \hyperF{2}{1}\left( \frac{n_1}{2},\frac{n_1 + n_2}{2}; \frac{n_1 + m+1}{2};-  x \Sigma_1^{-1} \Sigma_2 
\right).
\label{eq:roys-maxroot-2sample}
\end{align}
\end{corollary}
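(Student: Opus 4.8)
The plan is to derive Corollary~\ref{cor:max-root-result} from Theorem~\ref{thm:cdf} by a similarity reduction to the normalized case $\Sigma_1 = I_m$, followed by the specialization $\Omega = xI_m$ already recorded around \eqref{eq:acceptance-region}.

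First I would normalize $W_1$. Since $\Sigma_1$ is positive definite, let $\tilde W_1 = \Sigma_1^{-1/2} W_1 \Sigma_1^{-1/2}$ and $\tilde W_2 = \Sigma_1^{-1/2} W_2 \Sigma_1^{-1/2}$, where $\Sigma_1^{-1/2}$ is the positive definite square root. By the standard transformation rule for the Wishart distribution, $\tilde W_1 \sim W_m(n_1, I_m)$ and $\tilde W_2 \sim W_m(n_2, \tilde\Sigma_2)$ with $\tilde\Sigma_2 := \Sigma_1^{-1/2}\Sigma_2\Sigma_1^{-1/2}$, and $\tilde W_1, \tilde W_2$ are again independent (each depends on only one of $W_1, W_2$) and satisfy the hypotheses $n_1, n_2 \ge m$, $I_m, \tilde\Sigma_2$ positive definite. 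Moreover, on the probability-one event that $W_2$ is nonsingular,
\[
\tilde W_1 \tilde W_2^{-1} = \Sigma_1^{-1/2} W_1 \Sigma_1^{-1/2}\,\Sigma_1^{1/2} W_2^{-1}\Sigma_1^{1/2} = \Sigma_1^{-1/2}\,(W_1 W_2^{-1})\,\Sigma_1^{1/2},
\]
so $W_1 W_2^{-1}$ and $\tilde W_1\tilde W_2^{-1}$ are similar and hence have the same eigenvalues; in particular $l_1(W_1W_2^{-1}) = l_1(\tilde W_1\tilde W_2^{-1})$.

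Next I would apply Theorem~\ref{thm:cdf} to the normalized pair. By the equivalence \eqref{eq:acceptance-region}, $l_1(\tilde W_1\tilde W_2^{-1})\le x \Leftrightarrow \tilde U \le xI_m$ with $\tilde U = \tilde W_2^{-1/2}\tilde W_1\tilde W_2^{-1/2}$, so \eqref{eq:cdf} with $\Omega = xI_m$ and $\Sigma_2$ replaced by $\tilde\Sigma_2$ gives $P(l_1(W_1W_2^{-1})\le x)$ as the displayed product of Gamma factors times $|\tilde\Sigma_2\, xI_m|^{n_1/2}$ times $\hyperF{2}{1}(\frac{n_1}{2},\frac{n_1+n_2}{2};\frac{n_1+m+1}{2};-x\tilde\Sigma_2)$. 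It then remains to rewrite the two $\Sigma$-dependent factors: for the determinant, $|\tilde\Sigma_2\, xI_m|^{n_1/2} = x^{mn_1/2}|\Sigma_1^{-1/2}\Sigma_2\Sigma_1^{-1/2}|^{n_1/2} = x^{mn_1/2}|\Sigma_2\Sigma_1^{-1}|^{n_1/2}$; for the hypergeometric argument, $\Sigma_1^{-1/2}\Sigma_2\Sigma_1^{-1/2}$ and $\Sigma_1^{-1}\Sigma_2$ are similar, hence share eigenvalues, and—exactly as in the last step of the proof of Theorem~\ref{thm:cdf}—$\hyperF{2}{1}$ depends only on the eigenvalues of its matrix argument, so $\hyperF{2}{1}(\cdots;-x\tilde\Sigma_2) = \hyperF{2}{1}(\cdots;-x\Sigma_1^{-1}\Sigma_2)$. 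Substituting both into \eqref{eq:cdf} produces \eqref{eq:roys-maxroot-2sample}.

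The computation itself is bookkeeping; the only place that needs genuine care is the reduction step—checking that the map $W_i\mapsto \Sigma_1^{-1/2}W_i\Sigma_1^{-1/2}$ is a legitimate Wishart-preserving transformation, that it conjugates $W_1W_2^{-1}$ (so $l_1$ is unchanged), and that it is harmless to replace $\Sigma_1^{-1/2}\Sigma_2\Sigma_1^{-1/2}$ by $\Sigma_1^{-1}\Sigma_2$ inside both the determinant and $\hyperF{2}{1}$, which is again the similarity-invariance argument already used in Theorem~\ref{thm:cdf}.
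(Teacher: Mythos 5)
Your proof is correct and follows essentially the same route the paper intends: specialize Theorem~\ref{thm:cdf} at $\Omega = xI_m$ after reducing to $\Sigma_1 = I_m$ via the transformation $W_i \mapsto \Sigma_1^{-1/2}W_i\Sigma_1^{-1/2}$, which the paper only alludes to in the introduction (``the distribution of the roots of $W_1W_2^{-1}$ depends only on the roots of $\Sigma_1\Sigma_2^{-1}$''). You merely make explicit the Wishart-preserving normalization and the similarity-invariance bookkeeping that the paper leaves implicit.
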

\citet{chikuse-1977} already obtained the expression \eqref{eq:roys-maxroot-2sample}.

Kummer relations for $\hyperF{2}{1}$ 
(see e.g.\ \citet{james-1964ams}, \citet{muirhead-book})
are referred as 
\begin{align}
 \hyperF{2}{1}(a, b; c; X) &= | I_m - X |^{c - a - b} \hyperF{2}{1}(c-a-b, c-b; c; X)
\label{eq:Kummer-2F1-01}, \\ 
\hyperF{2}{1}(a, b; c; X) &= | I_m - X |^{-b} 
 \hyperF{2}{1}(c - a, b; c; - X (I_m - X)^{-1}), 
\label{eq:Kummer-2F1-02}
 \end{align}
and we apply \eqref{eq:Kummer-2F1-01} to \eqref{eq:roys-maxroot-2sample} to obtain
\begin{align}
\Pr(l_1(W_1 W_2^{-1}) <  x)  &=
\dfrac{\Gamma_m (\frac{n_1}{2} + \frac{n_2}{2}) \; \Gamma_m (\frac{m+1}{2})}{
\Gamma_m (\frac{n_2}{2}) \; \Gamma_m (\frac{n_1+ m+1}{2})
} 
x^{\frac{n_1 m}{2}} |\Sigma_2^{-1} \Sigma_1|^{\frac{n_2}{2}} | \Sigma_2^{-1} \Sigma_1 + x I_m|^{- \frac{n_1 + n_2}{2}} 
\nonumber \\
&
\qquad \cdot \hyperF{2}{1}\left(
 \frac{m+1}{2},  \frac{n_1+ n_2}{2};  \frac{n_1+ m+1}{2}; x (\Sigma_2^{-1} \Sigma_1 + x I_m)^{-1}
\right).
\label{eq:roys-maxroot-2sample02}
\end{align}
In addition, we also have
\begin{align}
\Pr(l_1(W_1 W_2^{-1}) <  x) 
 &= 
\dfrac{\Gamma_m (\frac{n_1}{2} + \frac{n_2}{2}) \; \Gamma_m (\frac{m+1}{2})}{
\Gamma_m (\frac{n_2}{2}) \; \Gamma_m (\frac{n_1+ m+1}{2})
} 
x^{\frac{n_1 m }{2}} |\Sigma_2^{-1} \Sigma_1 + x I_m|^{- \frac{n_1}{2}}
\nonumber \\ 
& \hspace{2em}
\cdot \hyperF{2}{1}\left(
 \frac{n_1}{2},  - \frac{n_2}{2} + \frac{m+1}{2};  \frac{n_1+ m+1}{2}; x (\Sigma_2^{-1} \Sigma_1 + x I_m)^{-1}
\right) 
\label{eq:roys-maxroot-2sample03}
\end{align}
by applying \eqref{eq:Kummer-2F1-02} to \eqref{eq:roys-maxroot-2sample02}.
If $r = \frac{n_2}{2} - \frac{m+1}{2}$ is a non-negative integer, 
we see that  \eqref{eq:roys-maxroot-2sample03} is terminated as a finite series.
Both \eqref{eq:roys-maxroot-2sample} and   \eqref{eq:roys-maxroot-2sample03}
are alternating series, while \eqref{eq:roys-maxroot-2sample02} is 
a series of nonnegative terms.  In Section \ref{sec:numerical} we use
\eqref{eq:roys-maxroot-2sample02} for numerical stability in
evaluating the initial value for HGM.

\citet{chikuse-1977} 
mentioned that the upper probability of the 
smallest root $l_m(W_1 W_2^{-1})$ can be obtained from \eqref{eq:roys-maxroot-2sample}
 by replacing $n_1$, $n_2$, $x$, $\Sigma_1$ and 
$\Sigma_2$ by $n_2$, $n_1$, $x^{-1}$, $\Sigma_2$ and $\Sigma_1$, respectively.
\begin{align}
\Pr(l_m(W_1 W_2^{-1}) \ge  x) &= \dfrac{\Gamma_m (\frac{m+1}{2}) \Gamma_m(\frac{n_1 + n_2}{2})
}{
\Gamma_m(\frac{n_1}{2}) \Gamma_m(\frac{n_2 + m + 1}{2})
} \left| \frac{1}{x} \Sigma_2^{-1} \Sigma_1 \right|^{\frac{n_2}{2}}
\nonumber 
\\
& \qquad 
\cdot \hyperF{2}{1} \left(
 \frac{n_2}{2}, \frac{n_1 + n_2}{2}; \frac{n_2 + m + 1}{2}; - \frac{1}{x} \Sigma_2^{-1} \Sigma_1
\right)
\label{eq:roys-minroot-2sample}
\end{align}
The equation \eqref{eq:roys-minroot-2sample}
 can be obtained form the integral of \eqref{eq:U-density}
because of 
\begin{align*}
 & \int_{U > \Omega} | U |^{\frac{n_1}{2} - \frac{m+1}{2}} | I_m + \Sigma_2 U | ^{-  \frac{n_1+n_2}{2}} d U
\\ 
 & \quad = \dfrac{\Gamma_m(\frac{n_2}{2})  \Gamma_m(\frac{m+1}{2}) 
 }{
 \Gamma_m(\frac{n_2 + m+1}{2}) 
 }
  | \Sigma_2  |^{- \frac{n_1+n_2}{2}} \; 
 | \Omega |^{-\frac{n_2}{2}}
 \hyperF{2}{1} \left(\frac{n_2}{2}, \frac{n_1+n_2}{2}; \frac{n_2}{2} + \frac{m+1}{2};  - (\Omega \Sigma_2)^{-1} \right),
\end{align*}
$\Pr(l_m(W_1 W_2^{-1})  > x ) = \Pr(U > x I_m)$ and 
the substitution of $\Sigma_2$ by $ \Sigma_1^{-1} \Sigma_2$.
The above integral can be found in  Problem 1.17 in p.50 of \citet{gupta-nagar-2000}.

\section{Relations to known results on the $F$ matrix and the beta matrix}
\label{sec:relations-to-known-results}
In this section we discuss the relationships between our results and earlier results.  We mainly consider
the null case $\Sigma_1 = \Sigma_2$.

\citet{constantine-1963} gave the cumulative distribution 
functions of the Wishart and the multivariate beta distributions.
  Let $B = (W_1 + W_2)^{-1/2} W_1  (W_1 + W_2)^{-1/2}$
where $W_1 \sim W_m(n_1, \Sigma)$, $W_2 \sim W_m(n_2, \Sigma)$, $n_1, n_2 \ge m$
and  $\Sigma > 0$.  Then the random matrix $B$ follows the multivariate beta distribution of 
the first kind with parameters $\frac{n_1}{2}$ and $\frac{n_2}{2}$.  Note that in the null case
$(W_1 + W_2)^{-1/2}$ does not have to be the positive definite square root, because of the orthogonal invariance of the beta distribution.
The cumulative distribution function of $B$ is given by
$$
 \Pr(B < \Omega) = \dfrac{\Gamma_m(\frac{n_1 + n_2}{2}) 
 \Gamma_m(\frac{m+1}{2})
 }{
 \Gamma_m(\frac{n_2}{2}) \Gamma_m(\frac{n_1+m+1}{2})
 }
 | \Omega |^{\frac{n_1}{2}} \hyperF{2}{1}\left(
  \frac{n_1}{2}; - \frac{n_2}{2} + \frac{m+1}{2}; \frac{n_1 + m+ 1}{2}; \Omega
 \right) $$
for $0 < \Omega < I_m$.
Therefore the cumulative distribution function of the 
largest root $b_1(B)$ of $B$ is given as 
$\Pr(b_1(B) \le x ) = \Pr(B \le x I_m)$ and,
from the relationship 
$$
\Pr (l_1(W_1 W_2^{-1}) \le x) = \Pr \left(b_1(B) \le  \frac{x}{1+x} \right), 
$$
we also have
\begin{align}
\Pr(l_1(W_1 W_2^{-1}) \le  x) &= \dfrac{\Gamma_m(\frac{n_1 + n_2}{2}) 
 \Gamma_m(\frac{m+1}{2})
 }{
 \Gamma_m(\frac{n_2}{2}) \Gamma_m(\frac{n_1+m+1}{2})
 }
\left(\frac{x}{1+x} \right)^{\frac{n_1 m}{2}} 
\nonumber 
\\
& \qquad \cdot \hyperF{2}{1}\left(
  \frac{n_1}{2}; - \frac{n_2}{2} + \frac{m+1}{2}; \frac{n_1 + m+ 1}{2}; 
  \frac{x}{1+x} I_m
 \right) 
 \label{eq:maxroot-null-constantine}
\end{align}
for $x \ge 0$.  The above equation is the same as \eqref{eq:roys-maxroot-2sample03} by substitutions of $\Sigma_1 = \Sigma_2$

Based on the results of \citet{khatri-1972jmva},
for the case that $r=\frac{n_2}{2} - \frac{m+1}{2}$ is a nonnegative integer, 
\citet{venables-1973jmva} 
obtained another expression of
$\Pr(b_1(B) < x)$ which is equivalent to 
\begin{align}
  \Pr \left(l_1(W_1 W_2^{-1}) \le x \right) = \left( \frac{x}{1+x} \right)^{\frac{n_1 m}{2}} 
  \sum_{k=0}^{r m} \frac{(1+x)^{- k}}{k !} \psum{\kappa \vdash k} (\frac{1}{2} n_1)_{\kappa} \; \znl{\kappa}(I_m) \label{eq:maxroot-null-venables}
\end{align}
where $r = \frac{n_1 -  m - 1}{2}$ is a positive integer and  $\psumtxt$ 
denotes the summation over all partitions $\kappa =(\kappa_1, \dots, \kappa_m)$ of $k$
with $\kappa_1 \le r$. 
For example, $m=3, n_1=6$ and $n_2 =10$, the equation \eqref{eq:maxroot-null-constantine} is 
\begin{align*}
& \frac{2145 x^9}{(x+1)^9}
 \left(-\frac{2 x^9}{143 (x+1)^9}+\frac{27 x^8}{143
   (x+1)^8}-\frac{166 x^7}{143 (x+1)^7}+\frac{9149 x^6}{2145 (x+1)^6}-\frac{113
   x^5}{11 (x+1)^5} 
   \right.
  \\
 &\left. \mbox{} +\frac{184 x^4}{11 (x+1)^4}-\frac{55 x^3}{3 (x+1)^3}+\frac{13
   x^2}{(x+1)^2}-\frac{27 x}{5 (x+1)}+1\right)
\end{align*}
and \eqref{eq:maxroot-null-venables} gives 
\begin{align*}
& 
\frac{x^9}{(x+1)^9} \left( \frac{30}{(x+1)^9}+
   \frac{135}{(x+1)^8} + \frac{330 }{(x+1)^7}+
   \frac{539}{(x+1)^6} 
 \right.
 \\
 &\mbox{}  + \left.
 \frac{ 531}{(x+1)^5}+
   \frac{360}{(x+1)^4} + \frac{165}{(x+1)^3}
   +\frac{45 }{(x+1)^2}+\frac{9 }{x+1}+1\right).
\end{align*}
We find that they are the same by their subtraction by symbolic computation.  
In general, it seems to be difficult to show that 
\eqref{eq:maxroot-null-constantine} and \eqref{eq:maxroot-null-venables} are equivalent, as pointed out in the concluding remarks of 
\citet{venables-1973jmva}. 

Finally we mention a result of \citet{khatri-1967ams} in the nonnull case. 
Section 3.4 of \citet{khatri-1967ams} gives  the density function of $l_1(W_1 W_2^{-1})$ 
in terms of $\hyperF{3}{2}$:
\begin{align}
f(l_1)&= c_2 |\Lambda|^{-\frac{n_1}{2}} l_1^{\frac{mn_1}{2}-1} |I_m+l_1 \Lambda^{-1}|^{-\frac{n_1+n_2}{2}}
\nonumber \\
& \qquad \cdot \hyperF{3}{2}\big(\frac{n_1+n_2}{2}, \frac{m}{2}+1, \frac{m-1}{2}; \frac{m}{2}, \frac{n_1+m+1}{2};
l_1 (\Lambda + l_1 I_m)^{-1}\big),
\label{eq:l1-density-by-khatri}
\end{align}
where $\Lambda = \Sigma_1 \Sigma_2^{-1}$ and
\[
c_2 = \frac{\Gamma(\frac{1}{2}) \Gamma_m(\frac{n_1+n_2}{2}) \Gamma_{m-1}( \frac{m}{2} +1)}
{\Gamma(\frac{m}{2})\Gamma(\frac{n_1}{2})\Gamma_m(\frac{n_2}{2})\Gamma_{m-1}(\frac{n_1+m+1}{2})}.
\]
Differentiation of \eqref{eq:Kummer-2F1-02} should yield
\eqref{eq:l1-density-by-khatri}, but it does not seem to be obvious.

\section{Holonomic gradient method for $\hyperF{2}{1}$}
\label{sec:hgm}

Applying the HGM for a numerical evaluation of the matrix hypergeometric
function ${}_2F_1$ is analogous to the case of ${}_1F_1$ of
\citet{hashiguchi-etal-1f1}.
We explain mainly the differences briefly.
Put
\begin{equation} \label{eq:2f1eq}
  g_i = \pd{i}^2 + [ p(x_i) + \sum_{j \not= i} q_2(x_i,x_j)] \pd{i}
  - \sum_{j \not= i} q(x_i,x_j) \pd{j} - r(x_i), \quad i=1,\dots,m,
\end{equation}
where
\allowdisplaybreaks
\begin{eqnarray*}
  p(x_i)&=& \frac{c-(m-1)/2-(a+b+1-(m-1)/2)x_i}{x_i (1-x_i)}, \\
  q_2(x_i,x_j)&=& \frac{1}{2(x_i-x_j)}, \\
  q (x_i,x_j)&=& \frac{x_j (1-x_j)}{2x_i (1-x_i) (x_i-x_j)}, \\
  r(x_i) &=& \frac{a b}{x_i(1-x_i)}.
\end{eqnarray*}
The matrix hypergeometric function ${}_2F_1(a,b,c;x_1, \ldots, x_m)$
is annihilated by the linear partial differential operator $g_i$'s
by \citet{muirhead-1970}.

\begin{theorem}
The set $\{ g_i \}_{i=1}^m$ is a Gr\"obner basis in the ring of 
differential operators with rational function coefficients
$R_m = {\bf C}(x_1, \ldots, x_m) \langle \pd{1}, \ldots, \pd{m} \rangle$.
\end{theorem}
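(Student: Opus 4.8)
The plan is to note that the leading monomials of $g_1,\dots,g_m$ are pairwise coprime and then to invoke the version of Buchberger's first (product) criterion that is valid for left ideals in a ring of differential operators.

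First I would fix conventions. View $R_m$ as an Ore algebra over the field $K=\C(x_1,\dots,x_m)$, so that monomials are the pure derivative monomials $\pd{1}^{\alpha_1}\cdots\pd{m}^{\alpha_m}$ and the term order $\succ$ is placed on these; take $\succ$ to be a graded term order (graded reverse lexicographic, say). From \eqref{eq:2f1eq}, every summand of $g_i$ other than $\pd{i}^2$ is a $K$-multiple of some $\pd{j}$ or of $1$, hence has $\partial$-degree at most $1$. Thus $\pd{i}^2$ is the unique term of $g_i$ of maximal degree, so $\interm(g_i)=\pd{i}^2$; and for $i\ne j$ the monomials $\pd{i}^2$ and $\pd{j}^2$ are coprime, with $\mathrm{lcm}(\pd{i}^2,\pd{j}^2)=\pd{i}^2\pd{j}^2$.

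Next I would apply Buchberger's criterion: $\{g_1,\dots,g_m\}$ is a Gröbner basis of the left ideal it generates as soon as each S-polynomial $\mathrm{sp}(g_i,g_j)$, $i<j$, reduces to $0$ modulo the set. Writing $g_i=\pd{i}^2+L_i$ with $L_i$ of $\partial$-order at most $1$ and using that the $\pd{i}$ commute,
\[
 \mathrm{sp}(g_i,g_j)=\pd{j}^2 g_i-\pd{i}^2 g_j=\pd{j}^2 L_i-\pd{i}^2 L_j .
\]
Every $\partial$-degree-$3$ term of $\pd{j}^2 L_i$ is a left multiple of $\pd{j}^2=\interm(g_j)$, so a single reduction step against $g_j$ removes it and lowers the order, and symmetrically for $\pd{i}^2 L_j$ against $g_i$; the assertion of the product criterion is that, because the two leading monomials involve disjoint derivatives, iterating the reduction terminates at $0$. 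Granting this, every S-polynomial reduces to $0$, so $\{g_i\}_{i=1}^m$ is a Gröbner basis. As a byproduct the standard monomials are the $\pd{1}^{\alpha_1}\cdots\pd{m}^{\alpha_m}$ with $\alpha\in\{0,1\}^m$, so Muirhead's ${}_2F_1$ system has holonomic rank $2^m$.

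The delicate point is the justification of the product criterion in the non-commutative ring $R_m$. Over a commutative polynomial ring it is classical; here one must check that the commutators produced when moving $\pd{i}^2$ and $\pd{j}^2$ past the rational coefficients $p,q,q_2,r$ do not obstruct the cancellation, the saving feature being that these commutators strictly lower the $\partial$-order. There are two ways to close this gap. One is to quote the product criterion in the form established for Gröbner bases in rings of differential operators --- more generally, for Ore algebras over a field whose commuting derivation variables make the associated graded ring the commutative polynomial ring $K[\xi_1,\dots,\xi_m]$ --- which is precisely our setting. The other, bypassing the general criterion, is to observe that the $2^m$ monomials $\pd{1}^{\alpha_1}\cdots\pd{m}^{\alpha_m}$, $\alpha\in\{0,1\}^m$, already span $R_m/(g_1,\dots,g_m)$ over $K$ by repeated use of $\pd{i}^2\equiv-L_i$, while $\interm(g_1,\dots,g_m)\supseteq(\pd{1}^2,\dots,\pd{m}^2)$, so that it suffices to know independently that $\dim_K R_m/(g_1,\dots,g_m)=2^m$ --- i.e.\ that the matrix ${}_2F_1$ system has holonomic rank $2^m$, for instance by exhibiting $2^m$ linearly independent local solutions --- whereupon a dimension count forces $\interm(g_1,\dots,g_m)=(\pd{1}^2,\dots,\pd{m}^2)$, which is the assertion.
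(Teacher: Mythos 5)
There is a genuine gap, and it sits exactly at the point you yourself flag as delicate. In $R_m$, coprimality of the leading monomials does \emph{not} imply that the S-pair reduces to zero: the correct non-commutative analogue of Buchberger's product criterion only says that when $\interm(f)$ and $\interm(g)$ are coprime, the S-pair reduces modulo $\{f,g\}$ to the commutator $[f,g]$, and one must still verify that $[f,g]$ reduces to zero. The general criterion you propose to quote is false in this setting. Counterexample: take $f=\pd{1}$ and $g=\pd{2}+x_1$ in $\C(x_1,x_2)\langle \pd{1},\pd{2}\rangle$. The leading monomials $\pd{1}$ and $\pd{2}$ are coprime, both leading coefficients are $1$, and the associated graded ring is the commutative polynomial ring, yet $fg-gf=1$ lies in the left ideal generated by $f,g$, so that ideal is all of $R_2$, while the leading monomials generate only $(\pd{1},\pd{2})$; hence $\{f,g\}$ is not a Gr\"obner basis. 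So your route (1) cannot be invoked, and your reduction argument, which lowers the order by one step, does not by itself terminate at zero. The paper's proof supplies precisely the missing ingredient: setting $G_i=x_i(1-x_i)g_i$, it computes the explicit relation $[G_i,G_j]=\tfrac12\,\tfrac{2x_ix_j-x_i-x_j}{(x_i-x_j)^2}\,(G_i-G_j)$, which is a standard representation of the commutator by elements whose leading monomials $\pd{i}^2,\pd{j}^2$ are not larger than that of $[G_i,G_j]$; this is the integrability condition that makes the S-pair reduce to zero, and it is the actual content of the theorem.

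Your fallback route (2) is sound as a scheme: the $2^m$ square-free monomials do span $R_m/M$ over $K$ by repeated use of $\pd{i}^2\equiv -L_i$, and if one knew $\dim_K R_m/M=2^m$ then a count of standard monomials would force $\interm(M)=(\pd{1}^2,\dots,\pd{m}^2)$ and hence the Gr\"obner property. But the needed input --- that Muirhead's system has holonomic rank $2^m$, e.g.\ via $2^m$ linearly independent local solutions together with the Cauchy--Kovalevskaya--Kashiwara correspondence --- is exactly the nontrivial fact you do not establish, and in this paper the rank-$2^m$ Pfaffian system is derived as a \emph{consequence} of the theorem, not available as a premise. As written, neither route closes the gap; the commutator computation (or an equivalent verification that each $[G_i,G_j]$ reduces to zero) is indispensable.
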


\begin{proof}
Put $G_i = x_i(1-x_i) g_i$.
Then, we can see 
$$ [G_i, G_j] = \frac{1}{2}
 \frac{2 x_i x_j - x_i - x_j}{(x_i-x_j)^2} (G_i-G_j)
$$
by calculation.
Let us consider the graded reverse lexicographic order among the monomials of
$\pd{1}, \ldots, \pd{m}$.
Then, the leading term of $G_i$ is $x_i (1-x_i) \pd{i}^2$
and the leading term of $G_i$ and $G_j$ are coprime when $i \not= j$.
Therefore, the commutator $[G_i,G_j]$ can be regarded as an $S$-pair and the relation above
leads the $S$-pair criterion $[G_i,G_j] \longrightarrow 0$ by
$\{ G_i \}$.
Hence, $\{g_i\}$ is a Gr\"obner basis.
\end{proof}

Let $M$ be the left ideal of $R_m$ generated by $g_i$, $i=1, \ldots, m$.
The important conclusion of this theorem is that the system 
can be transformed into a completely integrable Pfaffian system
$$ \pd{i} F \equiv P_i(x) F \quad \mathop{\rm mod} M  $$
where $P_i(x)$ is a $2^m \times 2^m$ matrix and
$F$ is a column vector of length $2^m$ whose $i$-th entry 
($i = 0, 1, \ldots, 2^m-1$)
is
$ \pd{}^\alpha = \prod_{k=1}^{m} \pd{k}^{\alpha_k}$,
$\alpha=(\alpha_1, \alpha_2, \ldots, \alpha_{m})$.
Here, $\alpha$ is a vector obtained by the binary expansion of $i$ as
$ i = \sum_{k=0}^{m-1} \alpha_{k+1} 2^k$, $\alpha_k \in \{0, 1\}$.
We use this Pfaffian system for a numerical evaluation of ${}_2F_1$.

We derive an explicit expression of the matrix $P_i$ and 
we utilize a sparsity of the matrix $P_i$ in our implementation
as follows.
Put 
$[m] = \{1, 2, \ldots, m\}$.
Suppose that $J \subset [m]$.
The set $J$ is encoded into a binary number {\tt jj};
if $k \in J$, then the $k$-th bit\footnote{Here, 
we count $1$-th bit, $2$-th bit,
$\ldots$ instead of counting $0$-th bit, $1$-th bit, $\ldots$
of the convention of programming.}
of {\tt jj} is $1$ and
if $k \not\in J$, then the $k$-th bit of {\tt jj} is $0$.
Note that when {\tt jj} is the encoding of the set $J$,
the encoding of $\{k\} \cup J$ is \verb@ ((1 << (k-1)) | jj) @
in the language C.
The operator $\pd{J}$ is defined as
$\prod_{k \in J} \pd{k}$.
Note that any element of $F$ can be written in this form.
Let us describe the matrix $P_i$ in terms of $\pd{J}$.

Let $I$ be a subset of $[m]$.
When $i \not\in I$, we have $\pd{i} \pd{I} = \pd{I'}$ where
$I' = \{i\} \cup I$ and $\pd{I'}$ is again an element of $F$.
Then, the corresponding row of the matrix $P_i$ is  a unit vector.
Suppose $i \not\in J$ and
put $I = \{i \} \cup J$.
We want to express $\pd{i} \pd{I}$
in terms of the elements of $F$ modulo the left ideal $M$.
Apply $\pd{J}$ to the operator $g_i$ as
\begin{eqnarray*}
  \pd{J} g_i &=& \pd{i}^2 \pd{J} + p(x_i) \pd{i}\pd{J} \\
  & & + \pd{J} \sum_{j\not=i} q_2(x_i,x_j) \pd{i}
      - \pd{J} \sum_{j\not=i} q(x_i,x_j) \pd{j} \\
  & & - r(x_i) \pd{J}.
\end{eqnarray*}
When $k \not\in J$, we have
$  \pd{J} q_2(x_i,x_k) \pd{i} = 
   q_2(x_i,x_k) \pd{I}$, \\
When $k \in J$, we have
$  \pd{J} q_2(x_i,x_k) \pd{i} = 
   q_2(x_i,x_k) \pd{I} + \frac{\partial q_2(x_i,x_j)}
   {\partial x_k} \pd{I \setminus \{k\}}
$, \\
When $k \not\in J$, we have
$ \pd{J} q(x_i,x_k) \pd{k} =
  q(x_i,x_k) \pd{J \cup \{k\}} $,  \\
When $k \in J$, we have
$ \pd{J} q(x_i,x_k) \pd{k} =
  q(x_i,x_k) \pd{J \setminus \{k\}} \pd{k}^2 
 + \frac{\partial q(x_i,x_k)}{\partial x_k} \pd{J}
$. \\
In summary, when
$i \not\in J$, we have
\begin{eqnarray*}
&&\underline{\pd{i}^2 \pd{J}} 
 + p(x_i) \pd{I}
 + \sum_{k \not=i} q_2(x_i,x_k) \pd{I}
 + \sum_{k \not=i, k \in J} \frac{\partial q_2(x_i,x_k)}{\partial x_k} \pd{I \setminus \{k\}} \\
& &
 - \sum_{k \not=i, k \not\in J} q(x_i,x_k) \pd{J \cup \{k\}}
 - \sum_{k \not=i, k \in J} q(x_i,x_k) \underline{ \pd{J \setminus  \{k\}} \pd{k}^2}
 - \sum_{k \not=i, k \in J} \frac{\partial q(x_i,x_k)}{\partial x_k} \pd{J} \\
& & - r(x_i) \pd{J} \equiv 0
\end{eqnarray*}
modulo the left ideal $M$ generated by $g_i$, $i=1, \ldots, m$.
It follows from this relation that 
the operator $\pd{i}^2 \pd{J}$ can be expressed in terms of
the element of $F$ inductively with respect to $\sharp J$
(the cardinality of $J$).


\section{Numerical experiments}
\label{sec:numerical}

The numerical evaluation by HGM consists of two steps.
The first step is an approximate evaluation of  ${}_2F_1$ 
and its derivatives $\pd{I} \bullet {}_2 F_1$, $I \subset [m]$
by \citet{koev-edelman}
at an initial point  $x=x_0$ ($x_0$ is {\tt q0} in our package) 
which is close to the origin.
The second step is the application of the Runge-Kutta method
to the ordinary differential equation obtained from the Pfaffian system.

We find that the numerical evaluation of $\hyperF{2}{1}$ is more challenging
than $\hyperF{1}{1}$ in \citet{hashiguchi-etal-1f1}.
Hence we employ some heuristics for the numerical evaluation.
\begin{enumerate}
\item We use the formula (\ref{eq:roys-maxroot-2sample02})
for the step one,
because all terms of ${}_2F_1$ is non-negative when $x$ is positive.
Convergence of series whose entries have alternating signs 
as \eqref{eq:cdf} in Theorem \ref{thm:cdf} is slow in general.
\item For small positive number $x_0$, 
we evaluate ${}_2F_1$ at $x_0(\Sigma_2^{-1} \Sigma_1 + x_0 I_m)^{-1}$.
Let $f_k$ be the $k$-th approximation of this series, which is the truncation
of the series more than degree $k=|\lambda|$ terms where $\lambda$ is 
a partition.
When $|(f_k-f_{k-1})/f_{k-1}|$ is smaller than 
{\tt assigned\_series\_error}, of which default value is $10^{-5}$ 
in our implementation,
we use the $k$-th approximation as the value of the step  one.
When the value ${\rm Pr}(\ell_1<x_0)$ is smaller than the assigned value
{\tt x0value\_min}, of which default value is $10^{-60}$
in our implementation, we increase $x_0$
and retry the evaluation.
Too small initial value is not acceptable for the Runge-Kutta method
by the double precision.
\item We use the adaptive version of the Runge-Kutta method
by 
the default value of relative and absolute errors are 
$10^{-10}$ and $|f_k-f_{k-1}| \times \mathop{\rm Pr}(\ell_1<x_0)/f_k$
in our implementation.
\item When one of the entry of the initial evaluation point 
$x_0(\Sigma_2^{-1} \Sigma_1 + x_0 I_m)^{-1}$
is close to $1$, the convergence of series ${}_2F_1$ becomes very slow.
In this case, we should decrease $x_0$ and the absolute error 
for the Runge-Kutta method.
\end{enumerate}
Under the above heuristics, numerical evaluation works well 
unless the parameters are extreme, for example when  
$n_i$'s are large, or the ratio of eigenvalues of
$\Sigma_2^{-1} \Sigma_1$ is large.
Systematic experiments and studies on parameter spaces for which
the HGM works well and improved algorithms will be future research topics.

Let us present some numerical examples to illustrate some border cases of performance of HGM.
We use our implementation of
the package {\tt hgm}\footnote{The demonstration in this paper was
performed on the version 1.16. It is newer than the version on cran.
This version can be obtainable from
{\tt http://www.math.kobe-u.ac.jp/OpenXM/Math/hgm}
}
for the system {\tt R}. 
The command {\tt hgm.p2wishart} evaluates the cumulative distribution function
in (\ref{eq:roys-maxroot-2sample02}).
The arguments are {\tt m}=$m$ (the dimension),
${\tt beta}=\mbox{the eigenvalues of }\Sigma_2^{-1} \Sigma_1$,
${\tt n1}=n_1$, ${\tt n2}=n_2$ (the degrees of freedom of two Wishart distributions respectively), 
${\tt q}$ (the last point of the evaluation interval) 
and ${\tt qo}$ (the point for the initial value).

\begin{example} 
We evaluate $\mathop{\rm Pr}(\ell_1 < x)$ for
$m=10$, $n_1=11$, $n_2=12$, and
$$\Sigma_2^{-1} \Sigma_1 = \mathop{\rm diag}(1,2,3,4,5,6,7,8,9,10)$$
by our implementation.
The starting point $x_0$ is $0.1$ and the approximation by the zonal polynomial
expansion is truncated at the degree 28, which is automatically determined
by the heuristics above.
It takes 13 minutes and 15 seconds to obtain Figure 
\ref{fig:m10_11_12} on a machine with Intel Xeon CPU (2.70GHz) with 256 G memory.
\begin{figure}[tb]
\begin{center}
\includegraphics[width=10cm]{./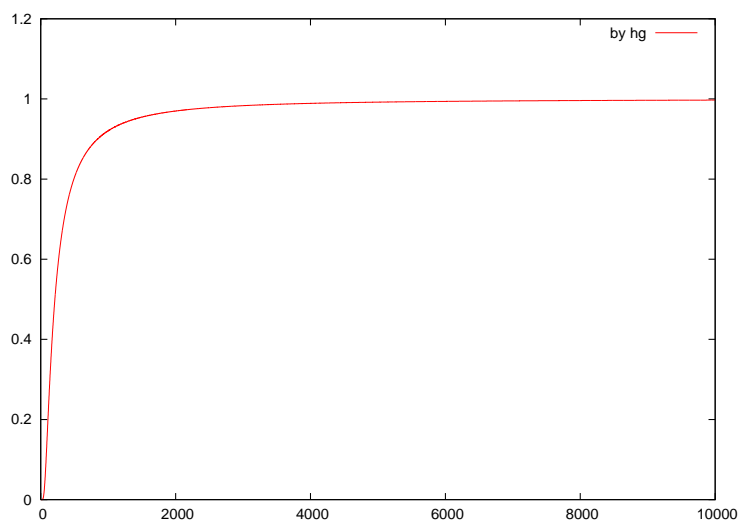}
\end{center}
\caption{$\mathop{\rm Pr}(\ell_1 < x)$, $m=10$}  \label{fig:m10_11_12}
\end{figure}
\end{example}

Let us see the behavior of our algorithm
when  the eigenvalues of
$\Sigma_2^{-1} \Sigma_1$
are a mixture of relatively small numbers and large numbers.
\begin{example}
The command
\begin{verbatim}
plot(hgm.p2wishart(m=3,beta=c(1,2,3),q=300,n1=10,n2=20,autoplot=1))
\end{verbatim}
works fine (no graph shown), 
but when we increase the eigenvalues $2$ and $3$ to $20$ and $300$ as
\begin{verbatim}
plot(hgm.p2wishart(m=3,beta=c(1,20,300),q=500,
                   n1=10,n2=20,autoplot=1),ylim=c(0,0.3))
\end{verbatim}
we get a warning ``abserr seems not to be small enough'',
which means that the control parameter for the absolute error for the adaptive Runge-Kutta method
is not small enough.
The default value of the absolute error is $1 \times 10^{-10}$.
The output is the left graph of Figure \ref{fig:1-20-300} and it looks like a wrong evaluation.
The trouble occurs when the initial value for the HGM is very small.
We should make the absolute error smaller or the initial evaluation point
${\tt q0}$, of which default value is $0.3$, larger.
Note that making {\tt q0} larger requires a lot of resources for approximate
evaluation of the series expansion of ${}_2F_1$.
Then, we retry the command with a new {\tt err} parameter vector, 
which specifies
the absolute error and the relative error for the adaptive Runge-Kutta method,
as
\begin{verbatim}
plot(hgm.p2wishart(m=3,beta=c(1,20,300),q=500,
                   n1=10,n2=20,err=c(1e-30,1e-10),autoplot=1))
\end{verbatim}
The output is the right graph of Figure \ref{fig:1-20-300} and looks a correct evaluation.
\begin{figure}[tb]
\begin{center}
\includegraphics[width=6cm]{./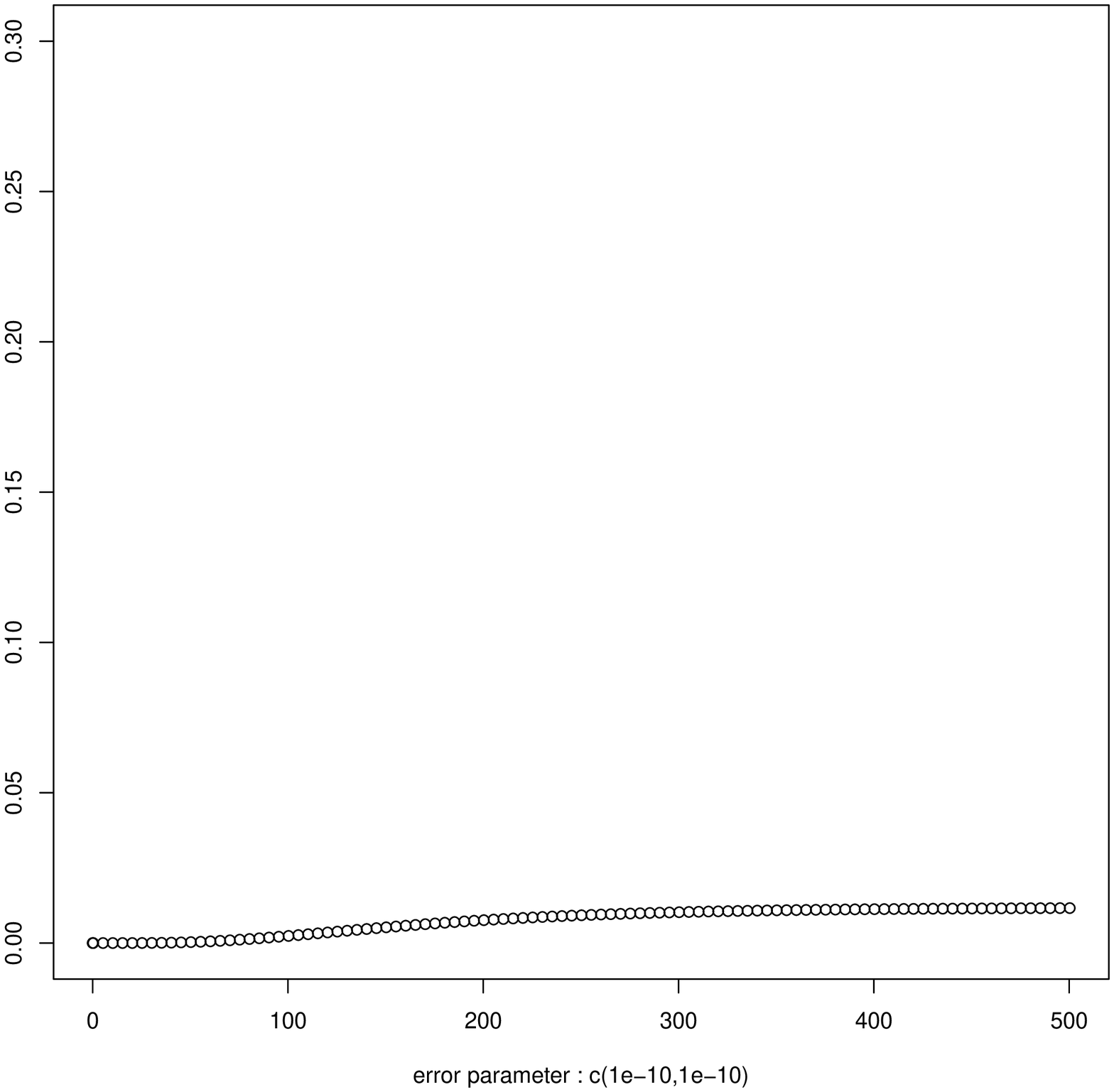} \quad
\includegraphics[width=6cm]{./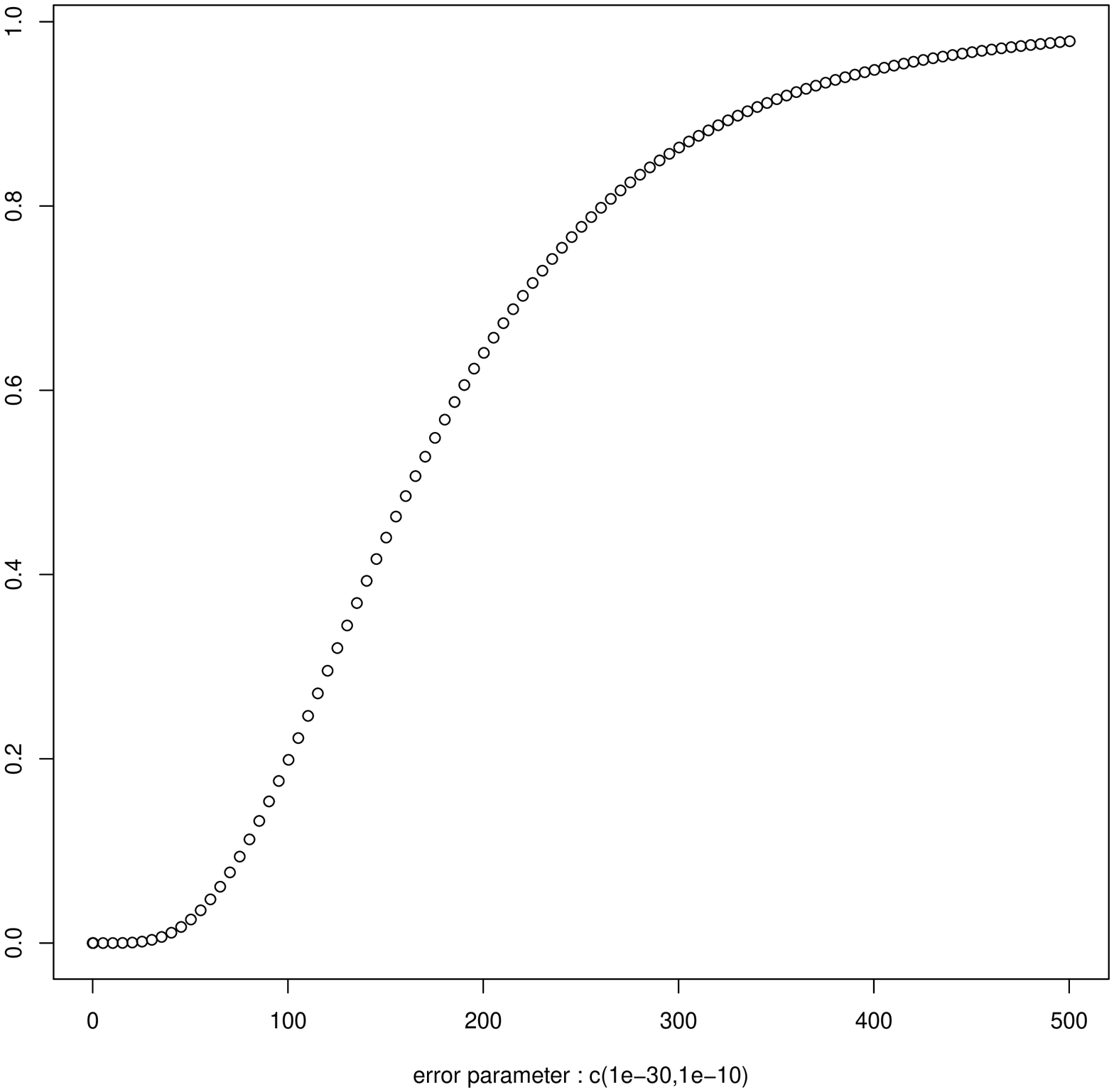}
\end{center}
\label{fig:1-20-300}
\caption{Mixture of a small eigenvalue and a large eigenvalue and effect of control parameter.}
\end{figure}
This example illustrates 
that inappropriate setting of the control parameter for the adaptive Runge-Kutta
method used in the hgm leads to a wrong answer.
The computation time is a few seconds for these examples, 
then we do not show detailed timing data.
\end{example}

The next example illustrates the behavior of our algorithm 
and an implementation for {\tt R} when 
the degrees of freedom becomes larger.
\begin{example}
We make the degrees of freedom $n_2$ to $200$.
\begin{verbatim}
plot(hgm.p2wishart(m=3,beta=c(1,20,300),q=50,
                   n1=10,n2=200,err=c(1e-30,1e-10),autoplot=1));
\end{verbatim}
This execution does not finish in a few seconds and takes 19 seconds
on Mac OS X 10.9 with 2.4GHz Intel Core i7 and 8G memory.
The output is the left graph of Figure \ref{fig:1-20-300-n2-200}.
A good choice of {\tt q0}, which is the initial evaluation point for the HGM
and of which default value is $0.3$,
improves the performance.
For example, the same evaluation with a different {\tt q0} as
\begin{verbatim}
plot(hgm.p2wishart(m=3,beta=c(1,20,300),q=300,
     n1=10,n2=200,err=c(1e-40,1e-10),autoplot=1,q0=0.1,verbose=1))
\end{verbatim}
finishes in 0.248 seconds.
\begin{figure}[tb]
\begin{center}
\includegraphics[width=6cm]{./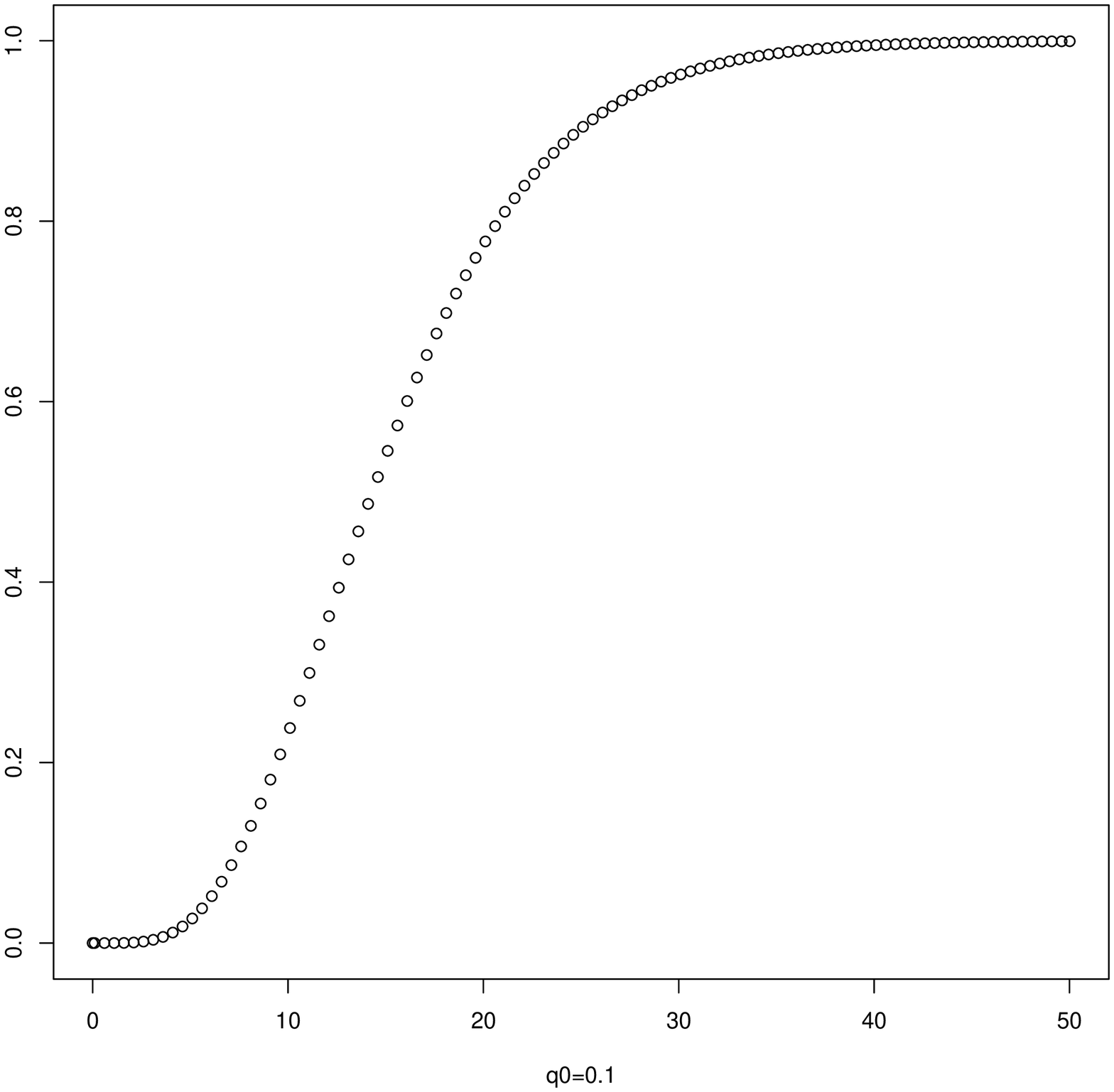} \quad
\includegraphics[width=6cm]{./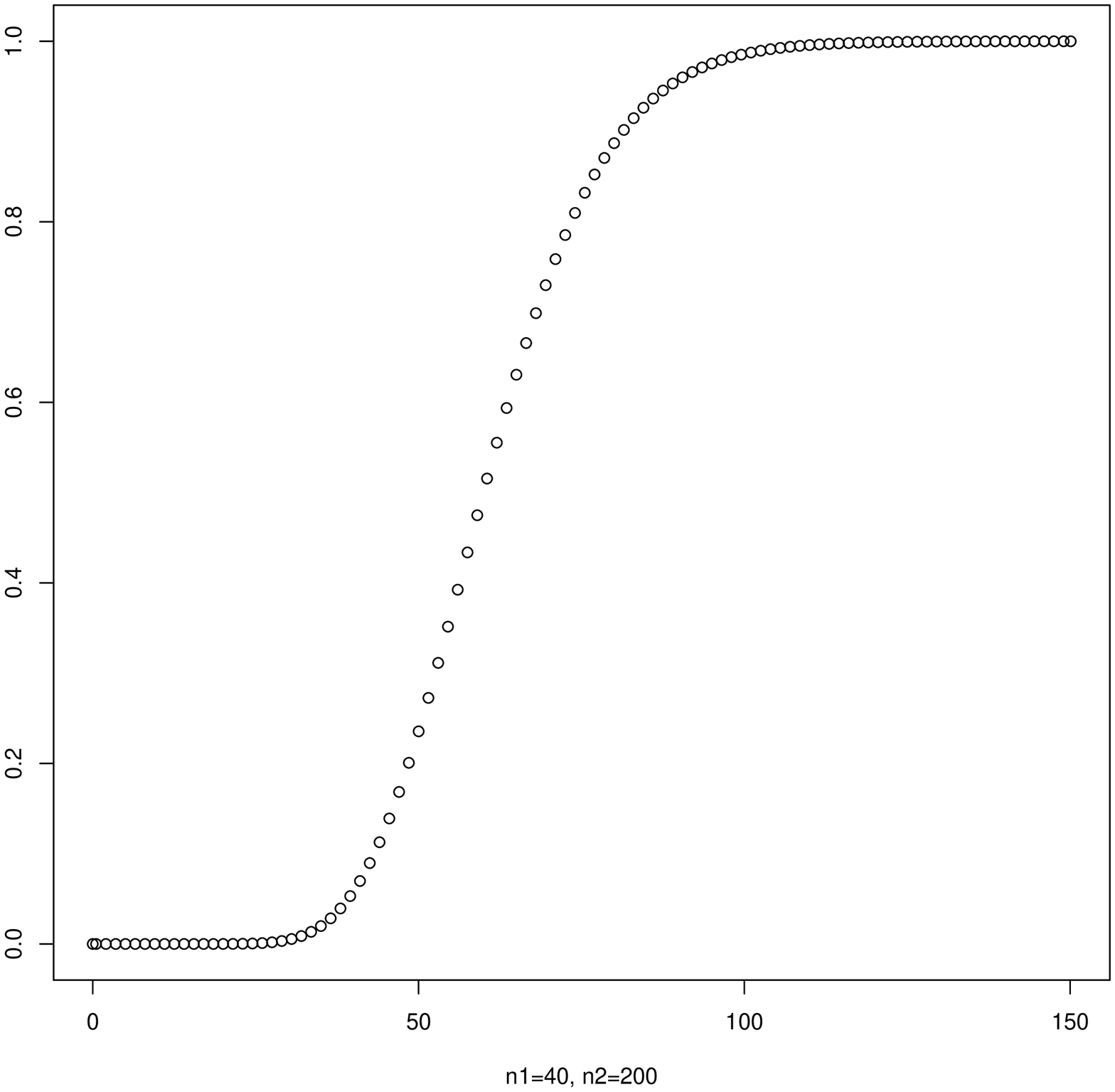}
\end{center}
\label{fig:1-20-300-n2-200}
\caption{Large degrees of freedom}
\end{figure}

We make the degrees of freedom $n_1$ to $300$.
\begin{verbatim}
plot(hgm.p2wishart(m=3,beta=c(1,20,300),q=300,q0=0.1,
                   n1=300,n2=200,err=c(1e-30,1e-10),autoplot=1));
\end{verbatim}
This execution stops with an error that the initial value
is zero, because the factor of ${}_2F_1$ in (\ref{eq:roys-maxroot-2sample02})
is too small.

Let us try other degrees of freedom $n_1=40$
with fixing the other parameters.
The output of
\begin{verbatim}
plot(hgm.p2wishart(m=3,beta=c(1,20,150),q=300,q0=0.1,
                   n1=40,n2=200,err=c(1e-60,1e-10),autoplot=1));
\end{verbatim}
is the right graph of Figure \ref{fig:1-20-300-n2-200}.
The execution takes 127.5 seconds.
\end{example}

\section{Discussion}
\label{sec:discussion}

Some open problems remain in this paper. 
We already mentioned  difficulties in proving that 
\eqref{eq:maxroot-null-constantine} and \eqref{eq:maxroot-null-venables} are equivalent.
Similarly differentiation of \eqref{eq:Kummer-2F1-02} should yield
\eqref{eq:l1-density-by-khatri}, but it does not seem to be obvious.

Another mathematically important question is the singularity of the differential operator
$g_i$ in \eqref{eq:2f1eq}.  
Note that $g_i$ has singularity in the diagonal region $x_i = x_j$, $i\neq j$.
Therefore HGM can not be used when there are multiple roots of $\Sigma_1 \Sigma_2^{-1}$.
The same problem was discussed in \citet{hashiguchi-etal-1f1} for the case of $\hyperF{1}{1}$.
In the case of $\hyperF{1}{1}$, Muirhead's differential operator $P_i$ annihilating $\hyperF{1}{1}$
is
\[
P_i = y_i \, \pdop_i^2 + \left\{ c - \frac{m-1}{2} -y_i + \2 \sum_{j=1, j \neq i}^m
\frac{y_i}{y_i - y_j} \right \} \pdop_i - 
\2  \sum_{j=1, j \neq i}^m \dfrac{y_j}{y_i - y_j}  \pdop_j 
-  a ,  \quad i=1,\dots,m.
\]
In \citet{hashiguchi-etal-1f1} it was conjectured that 
$y_i \prod_{j\neq i}(y_i -y _j)P_i$, $i=1,\dots,m$, generate a holonomic
left ideal in the Weyl algebra $D_m$. A proof for $m=2$ was given in Appendix A of
\citet{hashiguchi-etal-1f1}.
If this were the case for general $m$, 
then differential equations for the diagonal region could  be computed by
restriction algorithm for holonomic $D$-modules.
In \citet{hashiguchi-etal-1f1} restriction algorithm was
tried but failed for $m = 4$.
In fact \citet{kondo-2013}
proved hat the left $D$-ideal generated by $y_i \prod_{j\neq i}(y_i -y _j)P_i$
is not holonomic for $m = 4$.
On the other hand \citet{noro-2016} showed the use of 
l'H\^opital's rule in \citet{hashiguchi-etal-1f1}
can be generalized for computing a system of PDEs for various patterns of diagonalizations
of variables. By symbolic computations using Risa/Asir(\cite{asir}), it seems that the similar phenomenon occurs with
$\hyperF{2}{1}$.  However symbolic computations are heavier for 
$\hyperF{2}{1}$ than for $\hyperF{1}{1}$.  Hence further investigation is needed to clarify
the singularity of $g_i$ in the diagonal region $x_i = x_j$, $i\neq j$.

\appendix





\bibliographystyle{plainnat}
\bibliography{hgm2f1}

\begin{thebibliography}{17}
\providecommand{\natexlab}[1]{#1}
\providecommand{\url}[1]{\texttt{#1}}
\expandafter\ifx\csname urlstyle\endcsname\relax
  \providecommand{\doi}[1]{doi: #1}\else
  \providecommand{\doi}{doi: \begingroup \urlstyle{rm}\Url}\fi

\bibitem[Chikuse(1977)]{chikuse-1977}
Yasuko Chikuse.
\newblock Asymptotic expansions for the joint and marginal distributions of the
  latent roots of {$S\sb{1}S\sb{2}\sp{-1}$}.
\newblock \emph{Ann. Inst. Statist. Math.}, 29\penalty0 (2):\penalty0 221--233,
  1977.
\newblock ISSN 0020-3157.

\bibitem[Constantine(1963)]{constantine-1963}
A.~G. Constantine.
\newblock Some non-central distribution problems in multivariate analysis.
\newblock \emph{Ann. Math. Statist.}, 34:\penalty0 1270--1285, 1963.
\newblock ISSN 0003-4851.

\bibitem[Gupta and Nagar(2000)]{gupta-nagar-2000}
A.~K. Gupta and D.~K. Nagar.
\newblock \emph{Matrix {V}ariate {D}istributions}, volume 104 of \emph{Chapman
  \& Hall/CRC Monographs and Surveys in Pure and Applied Mathematics}.
\newblock Chapman \& Hall/CRC, Boca Raton, FL, 2000.
\newblock ISBN 1-58488-046-5.

\bibitem[Hashiguchi et~al.(2013)Hashiguchi, Numata, Takayama, and
  Takemura]{hashiguchi-etal-1f1}
Hiroki Hashiguchi, Yasuhide Numata, Nobuki Takayama, and Akimichi Takemura.
\newblock The holonomic gradient method for the distribution function of the
  largest root of a {W}ishart matrix.
\newblock \emph{J. Multivariate Anal.}, 117:\penalty0 296--312, 2013.
\newblock ISSN 0047-259X.

\bibitem[Herz(1955)]{herz-1955}
Carl~S. Herz.
\newblock Bessel functions of matrix argument.
\newblock \emph{Ann. of Math. (2)}, 61:\penalty0 474--523, 1955.
\newblock ISSN 0003-486X.

\bibitem[James(1964)]{james-1964ams}
Alan~T. James.
\newblock Distributions of matrix variates and latent roots derived from normal
  samples.
\newblock \emph{Ann. Math. Statist.}, 35:\penalty0 475--501, 1964.
\newblock ISSN 0003-4851.

\bibitem[Khatri(1967)]{khatri-1967ams}
C.~G. Khatri.
\newblock Some distribution problems connected with the characteristic roots of
  {$S\sb{1}S\sb{2}{}\sp{-1}$}.
\newblock \emph{Ann. Math. Statist.}, 38:\penalty0 944--948, 1967.
\newblock ISSN 0003-4851.

\bibitem[Khatri(1972)]{khatri-1972jmva}
C.~G. Khatri.
\newblock On the exact finite series distribution of the smallest or the
  largest root of matrices in three situations.
\newblock \emph{J. Multivariate Anal.}, 2:\penalty0 201--207, 1972.
\newblock ISSN 0047-259x.

\bibitem[Koev and Edelman(2006)]{koev-edelman}
Plamen Koev and Alan Edelman.
\newblock The efficient evaluation of the hypergeometric function of a matrix
  argument.
\newblock \emph{Math. Comp.}, 75\penalty0 (254):\penalty0 833--846, 2006.
\newblock ISSN 0025-5718.

\bibitem[Kondo(2013)]{kondo-2013}
Takayuki Kondo.
\newblock On a holonomic system of partial differential equations satisfied by
  the matrix ${}_1{F}_1$ (in japanese).
\newblock Master's thesis, Kobe universty, 2013.

\bibitem[Kuriki(1993)]{kuriki-1993as}
Satoshi Kuriki.
\newblock One-sided test for the equality of two covariance matrices.
\newblock \emph{Ann. Statist.}, 21\penalty0 (3):\penalty0 1379--1384, 1993.
\newblock ISSN 0090-5364.

\bibitem[Muirhead(1970)]{muirhead-1970}
R.~J. Muirhead.
\newblock Systems of partial differential equations for hypergeometric
  functions of matrix argument.
\newblock \emph{Ann. Math. Statist.}, 41:\penalty0 991--1001, 1970.
\newblock ISSN 0003-4851.

\bibitem[Muirhead(1982)]{muirhead-book}
Robb~J. Muirhead.
\newblock \emph{Aspects of {M}ultivariate {S}tatistical {T}heory}.
\newblock John Wiley \& Sons Inc., New York, 1982.
\newblock ISBN 0-471-09442-0.
\newblock Wiley Series in Probability and Mathematical Statistics.

\bibitem[Nakayama et~al.(2011)Nakayama, Nishiyama, Noro, Ohara, Sei, Takayama,
  and Takemura]{hgd}
Hiromasa Nakayama, Kenta Nishiyama, Masayuki Noro, Katsuyoshi Ohara, Tomonari
  Sei, Nobuki Takayama, and Akimichi Takemura.
\newblock Holonomic gradient descent and its application to the
  {F}isher-{B}ingham integral.
\newblock \emph{Advances in Applied Mathematics}, 47:\penalty0 639--658, 2011.
\newblock ISSN 0196-8858.

\bibitem[Noro(2016)]{noro-2016}
Masayuki Noro.
\newblock System of partial differential equations for the hypergeometric
  function ${}_1{F}_{1}$ of a matrix argument on diagonal regions.
\newblock In \emph{{ISSAC}'16 {P}roceedings of the {ACM} on {I}nternational
  {S}ymposium on {S}ymbolic and {A}lgebraic {C}omputation}, pages 381--388,
  2016.

\bibitem[RisaAsir developing team()]{asir}
RisaAsir developing team.
\newblock Risa/asir, a computer algebra system.
\newblock Available at http://www.math.kobe-u.ac.jp/Asir/.

\bibitem[Venables(1973)]{venables-1973jmva}
W.~Venables.
\newblock Computation of the null distribution of the largest of smallest
  latent roots of a beta matrix.
\newblock \emph{J. Multivariate Anal.}, 3:\penalty0 125--131, 1973.
\newblock ISSN 0047-259x.

\end{thebibliography}

\end{document}